\documentclass[11pt, english]{article}

\usepackage[margin= 1.9 cm]{geometry}
\usepackage[T1]{fontenc}
\usepackage{amsthm}
\usepackage{amsmath}
\usepackage{amssymb}
\usepackage{setspace}
\usepackage{mathtools}
\usepackage{verbatim}
\usepackage{booktabs}
\usepackage{graphicx}
\usepackage[hidelinks]{hyperref}
\usepackage{cleveref}
\usepackage{graphicx}
\usepackage{appendix}
\usepackage[inline]{enumitem}
\usepackage{framed}
\usepackage{subcaption}
\usepackage{microtype,xurl}
\usepackage{caption}
\usepackage{listings}
\usepackage{xcolor}

\usepackage{floatrow}
\usepackage[T1]{fontenc}

\lstdefinestyle{verifier}{
  language=Python,
  basicstyle=\ttfamily\fontsize{8.1}{10.2}\selectfont,
  keywordstyle=\bfseries,
  commentstyle=\itshape,
  stringstyle=\ttfamily,
  showstringspaces=false,
  keepspaces=true,
  columns=fullflexible,
  breaklines=true,
  breakatwhitespace=false,
  numbers=left,
  numberstyle=\tiny,
  numbersep=7pt,
  xleftmargin=1.6em,
  aboveskip=0.8em,
  belowskip=0.8em,
  tabsize=4,
  upquote=true
}

\theoremstyle{plain}

\newtheorem*{thm*}{Theorem}
\newtheorem{thm}{Theorem}
\Crefname{thm}{Theorem}{Theorems}

\newtheorem*{lem*}{Lemma}
\newtheorem{lem}[thm]{Lemma}
\Crefname{lem}{Lemma}{Lemmas}

\newtheorem*{claim*}{Claim}

\Crefname{claim}{Claim}{Claims}

\newtheorem{prop}[thm]{Proposition}
\Crefname{prop}{Proposition}{Propositions}

\Crefname{cor}{Corollary}{Corollaries}

\Crefname{conj}{Conjecture}{Conjectures}

\Crefname{qn}{Question}{Questions}

\newtheorem{obs}[thm]{Observation}
\Crefname{obs}{Observation}{Observations}

\newtheorem{ex}[thm]{Example}
\Crefname{ex}{Example}{Examples}

\theoremstyle{definition}

\Crefname{prob}{Problem}{Problems}

\Crefname{defn}{Definition}{Definitions}

\newtheorem*{defn*}{Definition}

\theoremstyle{remark}

\makeatletter
\newenvironment{claimproof}[1][Proof]{\par
	\pushQED{\qed}%
	
	\normalfont \topsep6\p@\@plus6\p@\relax
	\trivlist
	\item[\hskip\labelsep
	\textit{#1}\@addpunct{.}~]\ignorespaces
}{%
	\popQED\endtrivlist\@endpefalse
}
\makeatother

\newcommand{\ang}[1]{
    \langle #1 \rangle
}

\expandafter\def\expandafter\normalsize\expandafter{%
    \normalsize
    \setlength\abovedisplayskip{8pt}
    \setlength\belowdisplayskip{8pt}
    \setlength\abovedisplayshortskip{4pt}
    \setlength\belowdisplayshortskip{4pt}
}

\usepackage[square,sort,comma,numbers]{natbib}
\providecommand{\doi}[1]{\href{https://doi.org/#1}{doi:\nolinkurl{#1}}}

 \setlist[itemize]{leftmargin=*}

 \newcommand{\EE}{\mathbb E}

 \newcommand{\file}[1]{\mbox{\nolinkurl{#1}}}

\newcommand{\Span}{\operatorname{span}}

\newif\ifexternalbibliography
\IfFileExists{sunflower_references.bib}{\externalbibliographytrue}{\externalbibliographyfalse}

\title{Proof of the Erd\H{o}s-Szemer\'edi sunflower conjecture}
\author{
Matija Buci\'c\thanks{Faculty of Mathematics, University of Vienna, Vienna, Austria. This research was funded in whole or in part by the Austrian Science Fund (FWF) 10.55776/AST2416125. Email: \href{mailto:matija.bucic@univie.ac.at} { \ttfamily matija.bucic@univie.ac.at}. } 
\and
Fankang He%
        \thanks{Faculty of Mathematics, University of Vienna, Vienna, Austria. Email: \href{mailto:fankang.he@univie.ac.at} {\ttfamily fankang.he@univie.ac.at}.}
\and
Hyunwoo Lee%
        \thanks{Department of Mathematical Sciences, KAIST, South Korea and Extremal Combinatorics and Probability Group (ECOPRO), Institute for Basic Science (IBS).
        E-mail: {\ttfamily hyunwoo.lee@kaist.ac.kr.} Supported by the National Research Foundation of Korea (NRF) grant funded by the Korea government(MSIT) No. RS-2023-00210430, and the Institute for Basic Science (IBS-R029-C4).}
}

\date{}

\begin{document}
\maketitle
\begin{abstract}
 In the 1970s, Erd\H{o}s and Szemer\'edi conjectured that for any $n \ge k-1\ge 2$, any collection of subsets of $[n]$ not containing a $k$-sunflower has size at most $c_k^n$ for some $c_k<2$. In the $k=3$ case, this follows from the work of Croot, Lev, and Pach and Ellenberg and Gijswijt on the cap-set problem, through a reduction due to Alon, Shpilka, and Umans. We prove the conjecture for every $k \ge 3$.
\end{abstract}

\section{Introduction}
A collection of distinct sets is said to be a \emph{sunflower} if all their pairwise intersections are the same set. Sunflowers provide a simple way to separate the common structure of several sets from the parts that do not interact. This makes them useful both for finding order in large families and for simplifying collections of combinatorial witnesses. They are part of a basic toolkit in extremal combinatorics and Ramsey theory, with countless applications. They are also a tool for proving lower bounds for monotone circuits deciding whether a graph has a clique of specified size or a perfect matching and for studying thresholds of monotone properties~\cite{alonshpilkaumans2013,rao2026story}. In additive combinatorics, they are connected to the cap-set problem, while their vector and multicolored versions underpin combinatorial approaches to fast matrix multiplication \cite{alonshpilkaumans2013,blasiaketal2017}. They also underlie the compression of the hitting set instances in parameterized algorithms \cite{bannachtantau2018}, while information theory supplies a useful tool for arguing about sunflowers through Shannon's noiseless coding theorem; see~\cite{rao2020}.

Many of the above connections are underpinned by natural extremal questions concerning sunflower-free families. The most famous of these are arguably the Erd\H{o}s-Rado sunflower conjecture \cite{erdosrado1960} and the Erd\H{o}s-Szemer\'edi sunflower conjecture \cite{erdos1970extremal,erdos1971topics,erdosszem1978}. The latter is concerned with the very natural question of how large can the family $\mathcal F$ of subsets of $[n]=\{1,2,\ldots,n\}$ be if it does not contain a sunflower consisting of $k$ sets? 

This question has been repeated many times over the years, with the oldest reference the authors could find dating back to 1970, both by Erd\H{o}s with various coauthors \cite{erdos1970extremal,erdos1971analysis,erdos1971topics,erdos1971unsolved,erdos1975finite,erdos1976graphs,erdos1994setsystems,erdos1996favourite,erdosszem1978}, and by other authors \cite{deuberetal1997,chung1997problems,
chunggraham1998,kostochka2000systems,alonshpilkaumans2013,
blasiaketal2017,naslundsawin2017,
hegedus2018improved,hegedus2018,mubayiwang2018,
frankltokushige2018,alweissetal2021,alonholzman2023,
naslund2023,franklpachpalvolgyi2024,crootlevpach2025survey,
tangzhang2025,petrturek2026,luoyangzhu2026,BradacBucicSudakov2023}.
Erd\H{o}s and Szemer\'edi~\cite{erdosszem1978}, proved that for any fixed $k \ge 3$ the answer is at most $2^{n-\Omega(\sqrt n)}$ and conjectured that the answer should be exponentially smaller than $2^n$. 
Using improved uniform sunflower bounds of Kostochka \cite{kostochka1996}, Deuber, Erd\H{o}s, Gunderson, Kostochka, and Meyer~\cite{deuberetal1997} strengthened this to $2^{n-\sqrt{n\log\log n}/{\log\log\log n}}$ for large $n$.
The breakthrough on the uniform problem due to Alweiss, Lovett, Wu, and Zhang~\cite{alweissetal2021}, from 2019, improved the bound to $2^{n-n/(\log n)^{1+o(1)}}$. Subsequent refinements \cite{rao2020,bellchueluechawarnke2021} (in particular the one due to Rao \cite{rao2020}) give a slightly sharper bound of $2^{n-\Omega(n/\log n)}$, see \cite[Theorem 4.1]{alweissetal2021}.

For the special case of $k=3$, an important connection to additive combinatorics was established by Alon, Shpilka, and Umans~\cite{alonshpilkaumans2013}, who showed that an exponential improvement over the trivial bound for cap sets in $\mathbb F_3^n$ would imply the Erd\H{o}s-Szemer\'edi sunflower conjecture in the $k=3$ case.  Here, the polynomial-method breakthroughs of Croot, Lev, and Pach~\cite{crootlevpach2017} and Ellenberg and Gijswijt~\cite{ellenberggijswijt2017} in 2016 supplied this cap-set estimate. Naslund and Sawin~\cite{naslundsawin2017} then gave a direct argument giving the base of the exponent of ${3}/{2^{2/3}} \approx 1.89$ in the upper bound. Their proof, using Tao's slice-rank formulation of the polynomial method~\cite{tao2016}, turns the absence of a sunflower into a diagonal tensor. The slice rank of that tensor measures the size of the family, while an expansion of its coordinatewise definition gives an upper bound. The same circle of ideas connects cap sets, tensors, and the group-theoretic approach to matrix multiplication \cite{blasiaketal2017}.

We prove the Erd\H{o}s-Szemer\'edi sunflower conjecture for every $k \ge 3$.

\begin{thm}\label{thm:set}
For every integer $k\ge3$ and every integer $n\ge1$, any subset of $2^{[n]}$ not containing a sunflower consisting of $k$ sets has size at most 
\begin{equation*}
 3\left(2-\frac{1}{16k^3}\right)^n.
\end{equation*}
\end{thm}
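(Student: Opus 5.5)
The plan is a Naslund--Sawin style slice-rank argument, adapted from $k=3$ to general $k$ through a restricted coordinate alphabet. I first reduce to a uniform-type problem. Encode each set as its indicator vector in $\{0,1\}^n$. Distinct sets $A_1,\dots,A_k$ form a sunflower exactly when, in every coordinate, the column $(a_1,\dots,a_k)$ is constant or has a single $1$. The forbidden configurations therefore lie in a product structure on coordinates.

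By pigeonhole over the sizes $|A|$ (a factor $n+1$), or better over a partition of $[n]$ into blocks with prescribed block-sizes, I may assume all sets have the same size. This restriction costs only a subexponential factor, which can later be removed by a tensor-power trick. Indeed, if $\mathcal F$ is sunflower-free, then the family $\mathcal F^{\times m}\subseteq 2^{[nm]}$ of disjoint unions of $m$ copies is not automatically sunflower-free. So instead of using it directly, I would build the power argument into the slice-rank bound itself, since slice rank is submultiplicative and the bound is computed coordinatewise.

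The core is to define the $k$-tensor $T(x_1,\dots,x_k)=\prod_{i=1}^n \phi(x_{1i},\dots,x_{ki})$ over a field $\mathbb F_p$. Here $\phi:\{0,1\}^k\to\mathbb F_p$ is nonzero on the constant columns and vanishes on every column with at least two $1$'s and at least one $0$. On columns with exactly one $1$, $\phi$ may take any value. Restricted to $\mathcal F^k$, $T$ is nonzero only when every column is constant or has at most one $1$. If the $x_j$ are distinct, this makes them a sunflower, unless some of the $x_j$ coincide.

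Coincidences are the technical nuisance. I handle them by taking $\phi$ to vanish also on the columns with exactly one $1$ and then working with multiset patterns. Alternatively, I can use the standard trick of requiring the $k$ vectors to be distinct via the $k=3$-style "distinctness" argument: if $x_1,\dots,x_k$ are not distinct but every column is constant or has a single $1$, then equal pairs force the single $1$'s to be in positions held by a unique index, which yields restrictions I can make $\phi$ kill. With such a choice, $T$ restricted to $\mathcal F^k$ is diagonal with nonzero diagonal, so its slice rank is $|\mathcal F|$ by Tao's lemma.

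For the upper bound, I expand $\phi$ as a polynomial in $x_1,\dots,x_k$ of low total degree. For example, take $\phi=\prod_{j}(1-x_j)+c\prod_j x_j$ modulo symmetric corrections killing mixed columns. Then the slice rank of $T$ is bounded by $k\cdot\#\{$monomials in which some variable block has degree $\le D/k\}$, which is a large-deviation count. Over $\{0,1\}$ variables, each coordinate monomial involves a subset of the $k$ variables. Slicing by whichever variable $x_j$ appears in fewest coordinates, the count is at most $k\cdot\sum_{m}\binom{n}{m}(2^{k}-1)^{\dots}$, and one needs the per-coordinate generating function to give a base below $2$.

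I expect this step to be the main obstacle. For $k\ge 4$, a naive multilinear $\phi$ that vanishes on all mixed columns has degree about $k$ per coordinate, and the resulting base is not below $2$. The fix I would try first is to choose $\phi$ of degree $2$ per coordinate in a clever basis: for instance, a polynomial in $s=\sum_j x_j$ that vanishes at $s\in\{2,\dots,k-1\}$, restricted by first passing to a subfamily where the mixed-column sums are confined to a small window, obtained by the uniformity reduction and random restriction of coordinates. I would then optimise to a per-coordinate saving of order $k^{-3}$, matching the constant $2-\frac{1}{16k^3}$, with the factor $3$ absorbing the pigeonholing losses and the small-$n$ cases.
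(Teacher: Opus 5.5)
There is a genuine gap, and it sits exactly at the step you flag as the main obstacle. On the two-letter alphabet the bound you need is not merely hard but unavailable for $k\ge4$. (The rest of your setup is fine. Fixing $|A|$ already disposes of coincidences: if $x_1=x_2$ and every column is constant or has a single $1$, then $x_1\subseteq x_j$ for every $j$, and equal sizes force $x_1=\cdots=x_k$. By contrast, making $\phi$ vanish on single-$1$ columns turns $T$ into the identity tensor, which has full slice rank. The $(n+1)$ loss is harmless once an exponential saving exists, so no tensor-power trick is needed.)

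Suppose $\phi$ is a function of the column sum $\sigma=\sum_j x_j$ that vanishes at $\sigma=2,\dots,k-1$ but not at $\sigma=0$. Then it has multilinear degree at least $k-2$. The monomial count therefore guarantees at best some block of degree at most $(1-2/k)n\ge n/2$, and $k\sum_{m\le(1-2/k)n}\binom nm$ is not exponentially smaller than $2^n$. Only $k=3$ works, and that is the Naslund--Sawin $1.89^n$ argument.

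A cleverer $\phi$ does not help either. As the paper notes, Karingula and Lovett show that for $k\ge4$ every tensor faithfully realizing the set-sunflower relation on the Boolean cube has maximal slice rank, over every field and for arbitrary nonzero values on the support. When $\phi$ is nonzero on single-$1$ columns, your product tensor has support exactly the $k$-tuples with all pairwise intersections equal, so it is such a tensor. Bounding its slice rank on $(\{0,1\}^n)^k$, which is what monomial counting does, cannot give a base below $2$.

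Your suggested repair, confining ``mixed-column sums to a small window'' by passing to a subfamily, cannot work as stated. Since $\mathcal F^k$ contains tuples with repeated sets, every coordinate on which $\mathcal F$ is non-constant realizes every column sum $0,\dots,k$. Likewise, ``optimise to a saving of order $k^{-3}$'' reads off the target constant without supplying a mechanism.

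The missing idea is the restricted alphabet you mention in your first sentence but never implement: change the coordinates, not the family. The paper first proves a power saving in the alphabet size for the vector problem, $V_k(D,n)\le(C_kD^{1-1/(k-1)})^n$, where a $k$-sunflower in $[D]^n$ has entries all equal or pairwise distinct in each coordinate. The tester is a coordinatewise product of a function $q_c$ on $[D]^k$. It equals $1$ on monochromatic tuples and $0$ on tuples that are neither monochromatic nor rainbow. Its values on rainbow tuples are \emph{chosen} by successively replacing $\mathbf 1_{a_h=a_0}$ with the interpolation polynomial $\prod_{i=1}^{h-1}\frac{a_h-a_i}{a_0-a_i}$. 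This makes every term have degree at most $k-2$ in one of $a_1,\dots,a_{k-1}$, and pigeonhole over coordinates plus a Chernoff estimate give the bound.

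This is then transferred to sets. For each size $s$, a random permutation of $[N]$ yields $t=\min\{s,\lfloor (N-s)/(D-1)\rfloor\}$ blocks $B_1,\dots,B_t$ of size $D$ and a set $R$ of size $s-t$. The $D^t$ sets $R\cup\{b_1,\dots,b_t\}$ with $b_i\in B_i$ form a copy of $[D]^t$ in which every vector sunflower is a set sunflower. Averaging gives $|\mathcal F_s|\le\binom Ns\rho^t$ with $\rho=C/D$. Summing over $s$ with $D=16k^2$, so that $\rho=4^{-1/(k-1)}$, gives $3(2-\frac1{16k^3})^n$. This route needs only $C<D$ for one fixed alphabet size, and the Boolean-cube obstruction does not apply to the vector tester.
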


We note that for $n<k-1$ the assumption is vacuous, so one may take the whole set $2^{[n]}$. For $n \ge k-1$, the above theorem, combined with the trivial bound of $2^n-1$ (coming from observing that singletons and the empty set always make a sunflower), shows that one can obtain a universal upper bound of $c_k^{n}$ for some $c_k<2$.

The best known lower bound is due to Deuber, Erd\H{o}s, Gunderson, Kostochka, and Meyer~\cite{deuberetal1997}, who constructed $k$-sunflower-free families of size at least $\bigl(2-\frac{O(\log\log k)}k\bigr)^n$ for arbitrarily large $n$, showing that \Cref{thm:set} is not far from best possible for large $k$. 

\section{Proof ideas}\label{sec:proof-ideas}
We prove Theorem~\ref{thm:set} by first bounding sunflower-free families of vectors and then transferring this bound to set families. For $k=3$, Alon, Shpilka, and Umans~\cite[Section 2]{alonshpilkaumans2013} showed that the Erd\H{o}s--Szemer\'edi sunflower conjecture is equivalent to a power-saving bound in the alphabet size for the vector problem. Naslund and Sawin~\cite[Theorem~3]{naslundsawin2017} subsequently gave an explicit quantitative reduction from cap-set bounds, corresponding to the vector problem over a three-letter alphabet. We follow this approach for arbitrary $k$, with a modification of the reduction that improves the quantitative bound in the parameter range used here.

To see how vectors arise, suppose that a ground set is split into $n$ disjoint blocks of size $D$. A set choosing exactly one element from each block can be recorded as a vector in $[D]^n$. Such sets form a sunflower precisely when, in every block, their chosen elements are either all equal or pairwise distinct. This gives the following formulation, which we will refer to as the vector sunflower problem of Alon, Shpilka, and Umans.

\begin{defn*}
    For integers $k\ge3$ and $D,n\ge1$, a \emph{$k$-sunflower} in $[D]^n$ is a collection of $k$ pairwise distinct vectors $\mathbf{x}_0,\ldots,\mathbf{x}_{k-1}\in[D]^n$ such that, in each coordinate, their entries are either all equal or pairwise distinct. Let $V_k(D,n)$ be the maximum size of a subset of $[D]^n$ containing no $k$-sunflower. 
\end{defn*}

For the reduction to the set problem, it suffices to find one fixed alphabet size $D$ and a constant $1\le C<D$ such that $V_k(D,n)\le C^n$ for every $n\ge1$. In Section~\ref{sec:transfer}, we prove this implication for arbitrary $k$ using a probabilistic refinement of the block construction underlying the reduction of Naslund and Sawin. We average over random block choices within each family of sets of a fixed size, and then sum the resulting bounds over the possible sizes. This gives the quantitative saving needed for Theorem~\ref{thm:set}.

For all integer $D \geq 3$, Naslund and Sawin~\cite[Theorem~2]{naslundsawin2017} proved 
$$
    V_3(D,n) \le \left(\frac{3(D-1)^{2/3}}{2^{2/3}}\right)^n,
$$
which establishes the power-saving upper bound for the case $k = 3$ of the vector sunflower problem.

Their proof uses a Fourier character expansion of a symmetric function that is nonzero precisely when its three entries are all equal or pairwise distinct. Each term in this expansion has a constant factor in at least one of the three variables. After multiplying across coordinates, some vector therefore contributes at most $2n/3$ nontrivial characters, and counting these character products gives the exponent $2/3$ of the alphabet size in their bound. 

Our main new ingredient is an interpolation construction of a local function for arbitrary $k$. We choose its nonzero values on all-distinct tuples to depend on the entries, and use successive interpolation to arrange that every term has a polynomial factor of degree at most $k-2$ in one of the last $k-1$ variables, with the remaining factor independent of that variable. These low-degree factors all lie in a common space of dimension $k-1$. After multiplying across coordinates, a pigeonhole argument and a count of the resulting polynomial factors give the exponent $1-1/(k-1)$ below.

\begin{samepage}
\begin{thm}\label{thm:vector}
    Let $k\ge3$ and $D, n\ge1$ be integers. Then
    $$
        V_k(D,n)
        \le
        \bigl(C_k\cdot D^{1-1/(k-1)}\bigr)^n,
    $$
    where $C_k=(2k)^{2/(k-1)}$.
\end{thm}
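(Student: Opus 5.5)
The plan is to follow Tao's slice-rank method in the form used by Naslund and Sawin, replacing their Fourier expansion by an interpolation construction. Fix a sunflower-free set $A\subseteq[D]^n$ and work over a field $\mathbb F$ of characteristic $0$ (or a large prime), with $[D]$ viewed as a subset of $\mathbb F$. We construct a local function $f:[D]^k\to\mathbb F$ with three properties. First, $f(x,\ldots,x)\ne0$ for every $x$. Second, $f(x_0,\ldots,x_{k-1})=0$ whenever the entries are neither all equal nor pairwise distinct. Third, $f$ has an expansion $f=\sum_{j=1}^{k-1} g_j$, where each $g_j$ is a sum of products $p(x_j)\,h(x_0,\ldots,\widehat{x_j},\ldots,x_{k-1})$ with $p$ ranging over polynomials of degree at most $k-2$. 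On all-distinct tuples $f$ is allowed to take nonzero values depending on the entries.

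Set $F(\mathbf x_0,\ldots,\mathbf x_{k-1})=\prod_{i=1}^n f(x_{0,i},\ldots,x_{k-1,i})$ restricted to $A^k$. Take a tuple of vectors from $A$ that is not all equal. If the vectors are pairwise distinct, they do not form a sunflower, so some coordinate has entries that are neither all equal nor pairwise distinct, and $F$ vanishes. If they are not pairwise distinct, some coordinate has a repeated entry together with a different entry, and $F$ again vanishes. Hence $F$ is a diagonal tensor on $A^k$ with nonzero diagonal, and Tao's lemma gives $|A|\le \operatorname{slicerank}(F)$.

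To bound the slice rank, expand the product over coordinates. Each term chooses, for every coordinate $i$, an index $j_i\in\{1,\ldots,k-1\}$ and a factor of degree at most $k-2$ in $x_{j_i,i}$. By pigeonhole, some $j$ is chosen in at least $n/(k-1)$ coordinates. The degree-$\le k-2$ polynomials in one variable form a space of dimension $k-1$, independent of $D$. For each $j$, the $n$-coordinate product therefore lies in a space of dimension at most $\sum_{m\ge n/(k-1)}\binom nm (k-1)^m D^{n-m}$, where in the remaining coordinates we use the full $D$-dimensional space of functions of $x_j$. Each such term is a slice in variable $\mathbf x_j$. Summing over $j$ and bounding the binomial sum by the dominant term, roughly $D^{n(1-1/(k-1))}$ times $2^n(k-1)^{n/(k-1)}$ up to factors, should produce $(C_kD^{1-1/(k-1)})^n$ with $C_k=(2k)^{2/(k-1)}$ after a crude estimate. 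A tensor-power trick could clean up constants if needed.

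The main obstacle is constructing $f$. The plan is to build it by successive interpolation in $x_{k-1},x_{k-2},\ldots,x_1$. Start from $\delta(x_0=x_1=\cdots=x_{k-1})$ plus a correction supported on all-distinct tuples. At step $j$, write the current function as a polynomial in $x_j$ of degree at most $D-1$ via Lagrange interpolation on $[D]$. The high-degree part, of degree $\ge k-1$, must be absorbed by choosing the values on distinct tuples. For fixed other entries, $x_j$ is forced to avoid at most $k-1$ prescribed values, and the remaining freedom is exactly enough to cancel high-degree coefficients. The key identity is that a function of $x$ vanishing outside a set of size $k-1$ can be expressed as a degree-$\le k-2$ polynomial in $x$ times a function of the other variables. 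Concretely, $\delta(x_j=x_0)$ restricted to tuples where $x_j$ lies in $\{x_0,\ldots,x_{j-1}\}$ is a Lagrange polynomial of degree $\le k-2$ in $x_j$ with coefficients in the other variables. I would try to verify, by induction on $j$, that one can choose the distinct-tuple values so that the remainder becomes independent of $x_j$ and stays nonzero on the diagonal.
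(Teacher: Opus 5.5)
\textbf{Verdict: the framework is right, but the central construction and the final constant are both missing or wrong.}

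Your framework is the paper's, and that part is fine: a coordinatewise product of a local function that is nonzero on constant tuples, $0$ on mixed tuples and free on rainbow tuples; a diagonal-tensor bound on a sunflower-free set; pigeonhole over $x_1,\dots,x_{k-1}$; and a monomial count.

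\textbf{The main gap is the local function $f$.} It is the entire new ingredient (\Cref{lem:q_c-interpolation}), and you leave it at ``I would try to verify''. Moreover, the sketch would not work as written.
\begin{itemize}
\item Your key identity is false as stated. A function of $x\in[D]$ vanishing outside a $(k-1)$-point set, e.g.\ $\mathbf 1[x=a]$, has degree $D-1$ on $[D]$. Only its values \emph{on} that set are matched by a polynomial of degree $\le k-2$, and the error off the set must be carried along.
\item Rainbow values cannot cancel the high-degree part in $x_{k-1}$. If $x_0=\dots=x_{k-2}=a$, no rainbow tuple extends this prefix, so $f$ is forced to be a nonzero multiple of $\mathbf 1[x_{k-1}=a]$ as a function of $x_{k-1}$. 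Hence ``cancel the high-degree coefficients'' and ``make the remainder independent of $x_j$'' are the wrong targets.
\item Your reverse order $x_{k-1},\dots,x_1$ clashes with interpolating at the forward prefix $\{x_0,\dots,x_{j-1}\}$.
\end{itemize}

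\textbf{What is missing is a telescoping invariant.} Write $\Delta_1=\prod_{h=1}^{k-1}\mathbf 1[x_h=x_0]$ and peel off factors in the order $h=1,\dots,k-1$. Maintain
$\Delta_1=(\text{terms of degree}\le i-1\text{ in }x_i,\ i\le h)+R_h(x_0,\dots,x_h)\prod_{i>h}\mathbf 1[x_i=x_0]$, with $R_h$ supported on rainbow prefixes.
\begin{itemize}
\item At step $h$, let $P_h$ be the polynomial in $x_h$ of degree $\le h-1$ agreeing with $\mathbf 1[x_h=x_0]$ on $\{x_0,\dots,x_{h-1}\}$ when this prefix is rainbow, and $P_h=0$ otherwise.
\item Then $R_{h-1}P_h\prod_{i>h}\mathbf 1[x_i=x_0]$ is a new term of degree $\le h-1$ in $x_h$, with coefficients independent of $x_h$.
\item The new remainder $R_h=R_{h-1}(\mathbf 1[x_h=x_0]-P_h)$ vanishes whenever $x_h$ repeats an earlier entry.
\end{itemize}
After $k-1$ steps, $R_{k-1}$ is supported on rainbow tuples, and $f=\Delta_1-R_{k-1}$ has your three properties. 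Your Lagrange remark is exactly $P_j$, but without this invariant there is no proof.

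\textbf{Separately, your closing estimate does not give the stated constant.} Bounding $\binom nm$ by $2^n$ yields the base $2(k-1)^{1/(k-1)}D^{1-1/(k-1)}$. But $2(k-1)^{1/(k-1)}>(2k)^{2/(k-1)}$ for every $k\ge6$ (about $2.76$ versus $2.70$ at $k=6$). The tensor-power trick removes subexponential losses, such as your extra factor $k-1$ from summing slices over $j$, but it cannot fix a wrong base. The paper's route is as follows.
\begin{itemize}
\item Dispose of $D\le 4k^2$ with the trivial bound $D^n$.
\item For $D>4k^2$, count monomials exactly: exponents $\ge k-1$ have $D-k+1$ choices, as in \Cref{lem:degree-counting}. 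The count then equals $D^n\,\mathbb P\bigl(\mathrm{Bin}(n,(k-1)/D)\ge n/(k-1)\bigr)$.
\item The multiplicative Chernoff bound then gives exactly $C_k=(4k^2)^{1/(k-1)}$.
\end{itemize}
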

\end{samepage}

The saving in the power of $D$ is crucial in the reduction step. In particular, taking $D=16k^2$ gives $C_kD^{1-1/(k-1)}<D$, and the reduction in Section~\ref{sec:transfer} then yields Theorem~\ref{thm:set}. We now describe the main ideas behind the vector bound.

We aim to bound the size of a sunflower-free family in $[D]^n$ by constructing a function of $k$ vectors from $[D]^n$ with two properties. On the family, it should equal $1$ when all input vectors coincide and vanish otherwise. It should also admit an expansion in which every term has a factor from a small common polynomial space in one of the last $k-1$ vectors, multiplied by a function independent of that vector. An elementary linear-algebra argument will then bound the size of the family by the dimension of this space.

To obtain the first property, we exploit the fact that the sunflower condition can be checked separately in each coordinate. We define a local function that equals $1$ when all entries are equal, is nonzero when they are pairwise distinct, and vanishes otherwise. Multiplying this function across coordinates gives the required values on a sunflower-free family. The challenge is to obtain the second property as well. Here we use the freedom to choose the nonzero values on all-distinct tuples. Successive interpolation will construct these values so that the resulting function has an expansion with the required low-degree factors.

To do this, we start with the product of the indicators that each entry equals the first, and replace its factors one at a time by interpolation polynomials. The interpolation makes the remaining error vanish as soon as an entry repeats an earlier one. At the end, the error is therefore nonzero precisely on all-distinct tuples. Subtracting it gives the desired local function, together with an expansion in which each term has a factor of degree at most $k-2$ in one of the last $k-1$ variables.

We then multiply across coordinates and expand into monomials. The pigeonhole principle ensures that each monomial has low exponents in at least $\lceil n/(k-1)\rceil$ coordinates of some vector. Grouping terms according to such a vector gives the required decomposition with a common polynomial space. Counting the monomials spanning this space yields Theorem~\ref{thm:vector}.

\section{Vector sunflower-free sets}\label{sec:vector}

The goal of this section is to prove Theorem~\ref{thm:vector} by using the polynomial method, namely a dimension-counting argument. Let $U$ be an arbitrary finite set. For a $\mathbb{Q}$-vector space $\mathcal{P}$ of functions from $U$ to $\mathbb{Q}$, we denote its dimension by $\dim\mathcal{P}$. Also, to avoid technical confusion, if we write a function $f: \ang{X}^r \to \mathbb{Q}$, it means $f$ is an $r$-variable $\mathbb{Q}$-valued function whose inputs are $r$-tuples of elements of $X$. Throughout this section, we fix positive integers $k \geq 3$ and $D, n \geq 1$. We also call a tuple \emph{monochromatic} if its entries are all the same, and \emph{rainbow} if its entries are pairwise distinct.

Let $U$ be a finite set and let $V \subseteq U$. We say that a function $f:\ang{U}^k\to\mathbb{Q}$ is \emph{$k$-diagonal on $V$} if for all $x_0,\ldots,x_{k-1}\in V$, $f(x_0,\ldots,x_{k-1}) \neq 0$ if and only if $(x_0, \ldots, x_{k-1})$ is monochromatic. The following lemma bounds $|V|$ by the dimension of a common space of functions on $U$, provided that $f$ admits a decomposition into terms of a particular form. Each term must factor as a function from this space evaluated at one of the variables $x_1,\ldots,x_{k-1}$, multiplied by a function independent of that variable. In our application, this common space will consist of polynomial functions.

For readers familiar with slice rank, the following lemma refines the usual (see \cite[Lemma~1]{tao2016}) argument with diagonal tensors by exploiting a common space for the single-variable factors. This motivates seeking a decomposition that separates a function of one variable in each term, while keeping all such functions in a small common space.

\begin{lem}\label{lem:dim-argument}
    Let $k \ge 2$, and let $V \subseteq U$ be finite sets. Let $f:\ang{U}^k \to \mathbb{Q}$ be $k$-diagonal on $V$. Suppose that there is a finite-dimensional $\mathbb{Q}$-vector space $\mathcal{P}$ of functions from $U$ to $\mathbb{Q}$, with a basis $p_1,\ldots,p_d$, such that
    $$
        f(x_0,\ldots,x_{k-1})
        =
        \sum_{i=1}^{k-1}\sum_{j=1}^{d}
        p_j(x_i)
        G_{i,j}(x_0,\ldots,\widehat{x_i},\ldots,x_{k-1})
    $$
    for some functions $G_{i,j}:\ang{U}^{k-1}\to\mathbb{Q}$, where the hat denotes an omitted variable. Then
    $$
        |V| \le \dim \mathcal{P}.
    $$
\end{lem}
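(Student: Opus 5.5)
Restrict everything to $V$ and argue by linear algebra on the space $\mathbb{Q}^V$. Let $\mathcal{P}|_V$ denote the space of restrictions of functions in $\mathcal{P}$ to $V$; its dimension is at most $\dim\mathcal{P}$. Suppose for contradiction that $|V|>\dim\mathcal{P}$. Then $\mathcal{P}|_V$ is a proper subspace of $\mathbb{Q}^V$, so there is a nonzero function $h:V\to\mathbb{Q}$ orthogonal to it, i.e.\ $\sum_{x\in V}h(x)p_j(x)=0$ for every $j$.

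Consider the $(k-1)$-variable sum
$$
S(x_0)=\sum_{x_1,\ldots,x_{k-1}\in V} f(x_0,x_1,\ldots,x_{k-1})\,h(x_1)h(x_2)\cdots h(x_{k-1})
$$
for $x_0\in V$. We evaluate it in two ways.

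First, by diagonality, for $x_0,\ldots,x_{k-1}\in V$ the summand is nonzero only when $x_1=\cdots=x_{k-1}=x_0$. Hence $S(x_0)=f(x_0,\ldots,x_0)h(x_0)^{k-1}$, with $f(x_0,\ldots,x_0)\neq0$. Choosing $x_0\in V$ with $h(x_0)\ne0$ gives $S(x_0)\neq0$.

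Second, insert the decomposition. For the term indexed by $(i,j)$, the variable $x_i$ with $i\ge1$ appears only in $p_j(x_i)h(x_i)$, because $G_{i,j}$ does not depend on $x_i$. Summing over $x_i\in V$ first therefore factors out $\sum_{x_i\in V}p_j(x_i)h(x_i)=0$, and every term vanishes. So $S(x_0)=0$, a contradiction, and hence $|V|\le\dim\mathcal{P}$.

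There is no serious obstacle here. The two points to check are that the orthogonality is taken with respect to restrictions to $V$, and that $k\ge2$ guarantees at least one variable $x_i$ besides $x_0$ to sum over. All functions are finite sums over $V$, so the interchange of summations is trivial. If one prefers to avoid the inner product, the same argument can be phrased via a dual functional vanishing on $\mathcal{P}|_V$.
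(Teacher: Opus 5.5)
Your proof is correct and follows essentially the same route as the paper. The paper also takes a nonzero $h:V\to\mathbb{Q}$ with $\sum_{x\in V}h(x)p_j(x)=0$ for all $j$, found as a solution of a homogeneous system with $|V|$ unknowns and $d$ equations, and evaluates $\sum_{x_1,\ldots,x_{k-1}\in V} f(a,x_1,\ldots,x_{k-1})\prod_{i} h(x_i)$ in the same two ways; choosing $h$ via the orthogonal complement of $\mathcal{P}|_V$ is just a restatement of that step.
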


\begin{proof}
    Suppose for a contradiction that $|V|>d$. The homogeneous system
    $$
        \bigg\{ \sum_{x\in V}h(x)p_j(x)=0 : j \in [d] \bigg\}
    $$
    has $|V|$ unknowns and $d$ equations, so it admits a nonzero solution $h:V\to\mathbb{Q}$. Choose $a\in V$ with $h(a)\ne0$ and consider
    $$
        S
        =
        \sum_{x_1,\ldots,x_{k-1}\in V}
        f(a,x_1,\ldots,x_{k-1})
        \prod_{i=1}^{k-1}h(x_i).
    $$
    Substitute the assumed decomposition of $f$. In each summand indexed by $(i,j)$, the function $G_{i,j}$ is independent of $x_i$, so summing first over $x_i$ gives the factor
    $$
        \sum_{x_i\in V}h(x_i)p_j(x_i)=0,
    $$
    hence $S=0$. On the other hand, since $f$ is $k$-diagonal on $V$, the only nonzero contribution to $S$ comes from $x_1=\cdots=x_{k-1}=a$. Thus $S=f(a, \ldots, a)h(a)^{k-1}\neq 0$, a contradiction.
\end{proof}

In our application of Lemma~\ref{lem:dim-argument}, we take $U=[D]^n$ and let $V$ be a $k$-sunflower-free subset of $[D]^n$.
To reflect the coordinatewise structure of sunflowers, we say a function $T:\ang{[D]^n}^k\to\mathbb{Q}$ is a \emph{sunflower tester} if for all $\mathbf{x}_0, \dots, \mathbf{x}_{k-1} \in [D]^n$, the value $T(\mathbf{x}_0, \ldots, \mathbf{x}_{k-1}) \neq 0$ if and only if for all $i\in[n]$, the tuple $((\mathbf{x}_0)_i, \ldots, (\mathbf{x}_{k-1})_i)$ is either monochromatic or rainbow.

We think of such $T$ as a test function for $k$-sunflowers: it is nonvanishing if and only if the input vectors are all the same or they form a $k$-sunflower. The vector definition of a sunflower and Lemma~\ref{lem:dim-argument} give the following consequence.

\begin{lem}\label{lem:sunflower-dim}
    A sunflower tester $T:\ang{[D]^n}^k\to\mathbb{Q}$ is $k$-diagonal on every $k$-sunflower-free subset $V\subseteq[D]^n$. In particular, if $T$ admits the decomposition in Lemma~\ref{lem:dim-argument} with a finite-dimensional $\mathbb{Q}$-vector space $\mathcal{P}$ of functions from $[D]^n$ to $\mathbb{Q}$, then $|V|\le\dim\mathcal{P}$ for every such $V$. Consequently,
    $$
        V_k(D,n)\le\dim\mathcal{P}.
    $$
\end{lem}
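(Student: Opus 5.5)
The plan is to check $k$-diagonality of $T$ on $V$ directly from the definitions, then invoke Lemma~\ref{lem:dim-argument}. Fix a $k$-sunflower-free set $V\subseteq[D]^n$ and vectors $\mathbf{x}_0,\ldots,\mathbf{x}_{k-1}\in V$. We must show that $T(\mathbf{x}_0,\ldots,\mathbf{x}_{k-1})\neq0$ holds exactly when all the $\mathbf{x}_i$ are equal.

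For the first direction, suppose all $\mathbf{x}_i$ coincide. Then every coordinate tuple is monochromatic, so the sunflower tester property gives $T\neq0$.

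For the converse, suppose $T\neq 0$. Then every coordinate tuple $((\mathbf{x}_0)_c,\ldots,(\mathbf{x}_{k-1})_c)$ is monochromatic or rainbow. I split into two cases.
\begin{itemize}
\item If some two vectors coincide, say $\mathbf{x}_a=\mathbf{x}_b$ with $a\neq b$, then in each coordinate the tuple has a repeated entry, so it cannot be rainbow. Since $k\ge 3$ forces at least two positions, the tuple must be monochromatic in every coordinate, and hence all $\mathbf{x}_i$ are equal.
\item Otherwise the $\mathbf{x}_i$ are pairwise distinct. Then they form a $k$-sunflower in $V$ by definition, which contradicts the assumption that $V$ is sunflower-free.
\end{itemize}
Thus $T$ is $k$-diagonal on $V$.

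For the final statement, apply Lemma~\ref{lem:dim-argument} with $U=[D]^n$ and $f=T$. This gives $|V|\le\dim\mathcal{P}$ for every sunflower-free $V$. Taking $V$ to be a maximum sunflower-free set yields $V_k(D,n)\le\dim\mathcal{P}$.

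There is no real obstacle here. The only subtle point is the first case of the converse: one must note that a single repetition among the vectors propagates to every coordinate, ruling out the rainbow option everywhere.
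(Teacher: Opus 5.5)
Your proof is correct and follows essentially the same route as the paper. Both arguments rule out pairwise distinct vectors by sunflower-freeness, note that one coincidence between two vectors forbids the rainbow option in every coordinate (forcing all vectors equal), and then apply Lemma~\ref{lem:dim-argument} with $U=[D]^n$, maximizing over $V$.
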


\begin{proof}
    Let $V\subseteq[D]^n$ be $k$-sunflower-free, and let $\mathbf{x}_0,\ldots,\mathbf{x}_{k-1}\in V$ satisfy $T(\mathbf{x}_0,\ldots,\mathbf{x}_{k-1})\ne0$. By our definition of a sunflower tester, their entries at each coordinate are either all equal or pairwise distinct. If the vectors were pairwise distinct, they would therefore form a $k$-sunflower, contrary to the assumption on $V$. Hence two of the vectors coincide. Their entries agree at every coordinate, ruling out the pairwise distinct alternative at every coordinate. Thus $\mathbf{x}_0=\cdots=\mathbf{x}_{k-1}$. Since a sunflower tester is nonzero on every monochromatic tuple, this shows that $T$ is $k$-diagonal on $V$. If $T$ admits the stated decomposition, Lemma~\ref{lem:dim-argument} gives $|V|\le\dim\mathcal{P}$. Maximizing over all $k$-sunflower-free subsets $V\subseteq[D]^n$ yields the bound on $V_k(D,n)$.
\end{proof}

\subsection{Extract low degree polynomials via interpolation} \label{subsec:interpolation}

Our goal is to construct a sunflower tester $T:\ang{[D]^n}^k\to\mathbb{Q}$ that admits the decomposition in Lemma~\ref{lem:dim-argument} with a low-dimensional space $\mathcal{P}$, and then apply Lemma~\ref{lem:sunflower-dim}. Since the sunflower condition is determined coordinatewise, a natural approach is to choose a function $q:[D]^k\to\mathbb{Q}$ encoding the allowed equality patterns in a single coordinate and define $T: \ang{[D]^n}^k \to \mathbb{Q}$ as a product of $q$ values at the $k$-tuple consisting of the $i$-th coordinates of the input. 

For a function $c:\ang{[D]}^k\to \mathbb{Q}$, which is nonvanishing on rainbow tuples, we define a function $q_c:\ang{[D]}^k\to\mathbb{Q}$ as
$$
    q_c(a_0, a_1,\ldots,a_{k-1})
    =
    \begin{cases}
        1,
        & \text{if } (a_0,a_1,\ldots,a_{k-1})\text{ is monochromatic},\\
        c(a_0, a_1, \dots, a_{k-1}),
        & \text{if } (a_0,a_1,\ldots,a_{k-1})\text{ is rainbow},\\
        0,
        & \text{otherwise},
    \end{cases}
$$
to reflect the structure of sunflowers. We also define $T_c: \ang{[D]^n}^k \to \mathbb{Q}$ as
$$
    T_c(\mathbf{x}_0, \dots, \mathbf{x}_{k-1}) = \prod_{i\in [n]} q_c((\mathbf{x}_0)_i, \dots, (\mathbf{x}_{k-1})_i).
$$
We will specify the function $c$ that fits our purpose later. For any such choice, it is immediate that the function $T_c$ is a sunflower tester.

\begin{obs}\label{obs:T_c-proper}
    Let $c:\ang{[D]}^k\to \mathbb{Q}$ be nonvanishing on rainbow tuples. Then $T_c:\ang{[D]^n}^k\to\mathbb{Q}$ is a sunflower tester.
\end{obs}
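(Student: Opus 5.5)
The plan is to unwind the definitions directly. Since $T_c$ is a product over coordinates, it vanishes exactly when some factor vanishes. So it suffices to understand exactly when the one-coordinate function $q_c$ is nonzero, and then read off the product.

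\textbf{Step 1: the zero set of $q_c$.} Fix $(a_0,\ldots,a_{k-1})\in[D]^k$. We claim that $q_c(a_0,\ldots,a_{k-1})\neq 0$ if and only if the tuple is monochromatic or rainbow. If the tuple is monochromatic, $q_c$ takes the value $1\neq0$. If it is rainbow, $q_c$ takes the value $c(a_0,\ldots,a_{k-1})$, which is nonzero by the hypothesis that $c$ does not vanish on rainbow tuples. In every remaining case, $q_c$ is $0$ by definition.

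One point needs checking: the case split in the definition of $q_c$ must be unambiguous. Because $k\ge3$, a tuple cannot be both monochromatic and rainbow, since a rainbow tuple has at least two distinct entries.

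\textbf{Step 2: pass to the product.} Let $\mathbf{x}_0,\ldots,\mathbf{x}_{k-1}\in[D]^n$. Then $T_c(\mathbf{x}_0,\ldots,\mathbf{x}_{k-1})$ is a finite product of rational numbers. A finite product of rationals is nonzero if and only if every factor is nonzero. Applying Step 1 to the tuple $((\mathbf{x}_0)_i,\ldots,(\mathbf{x}_{k-1})_i)$ for each $i\in[n]$ shows that $T_c\neq0$ if and only if every coordinate tuple is monochromatic or rainbow. This is precisely the defining condition of a sunflower tester.

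There is no genuine obstacle in this argument. The only places needing care are the well-definedness of the case split in $q_c$, handled in Step 1, and the use of the fact that $\mathbb{Q}$ has no zero divisors, used in Step 2.
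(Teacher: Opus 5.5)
Your proof is correct and follows the same coordinatewise unwinding of the definitions as the paper. You in fact verify the full biconditional in the definition of a sunflower tester, using that $c$ is nonzero on rainbow tuples. The paper only checks that $T_c(\mathbf{x},\ldots,\mathbf{x})=1$ and that $T_c$ vanishes when some coordinate is neither monochromatic nor rainbow, which is all that Lemma~\ref{lem:sunflower-dim} actually uses.
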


\begin{proof}
    For every $\mathbf{x}\in[D]^n$, each factor in $T_c(\mathbf{x},\ldots,\mathbf{x})$ equals $1$, so $T_c(\mathbf{x},\ldots,\mathbf{x})=1$. If $\mathbf{x}_0,\ldots,\mathbf{x}_{k-1}$ have entries at some coordinate that are neither all equal nor pairwise distinct, the corresponding factor in the defining product for $T_c$ is zero. Hence $T_c(\mathbf{x}_0,\ldots,\mathbf{x}_{k-1})=0$. Thus $T_c$ satisfies the condition for being a sunflower tester.
\end{proof}

The main task is to choose $c$ so that $T_c$ admits the decomposition in Lemma~\ref{lem:dim-argument} with a low-dimensional space $\mathcal{P}$. To this end, we will find a polynomial representation of $q_c$ in which every monomial has an exponent at most $k-2$ in at least one of the variables $a_1,\ldots,a_{k-1}$. After taking products across coordinates, a pigeonhole argument will turn this restriction into the required decomposition of $T_c$.

Since $q_c$ is defined on the finite discrete domain $\ang{[D]}^k$, it admits many polynomial representations. In other words, distinct polynomials represent the same function whenever they agree at every point of this domain. We will use successive interpolation to choose the values of $q_c$ on all-distinct tuples and obtain a polynomial representation with the desired one variable with low exponent in each monomial property.

\begin{lem}\label{lem:q_c-interpolation}
    There exists a function $c:\ang{[D]}^k\to\mathbb{Q}$ that is nonvanishing on rainbow tuples such that $q_c$ admits the representation
    \begin{equation}\label{eq:low-degree-decomp}
        q_c(a_0,\ldots,a_{k-1})
        =
        \sum_{i=1}^{k-1}\sum_{j=0}^{i-1}
        a_i^j F_{i,j}(a_0,\ldots,\widehat{a_i},\ldots,a_{k-1})
    \end{equation}
    for all $a_0,\ldots,a_{k-1}\in[D]$, where $F_{i,j}:\ang{[D]}^{k-1}\to\mathbb{Q}$ are functions and the hat denotes an omitted variable.
\end{lem}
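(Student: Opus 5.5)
We build $q_c$ by successive interpolation, starting from the monochromatic indicator. For $a_0\in[D]$ and an integer $i\ge1$, let $L_i(a_0;t)$ be the Lagrange interpolation polynomial in $t$ of degree at most $i-1$. It is designed to interpolate the indicator $[t=a_0]$ on the set $\{a_0,a_1,\ldots,a_{i-1}\}$. Concretely, when $a_0,\ldots,a_{i-1}$ are distinct we set
\[
  L_i(t)=\prod_{m=1}^{i-1}\frac{t-a_m}{a_0-a_m}.
\]
In general the node set is taken to be the set of distinct values among $a_0,\ldots,a_{i-1}$. The interpolant then has degree at most $i-1$ in $t$, with coefficients that are functions of $a_0,\ldots,a_{i-1}$ only. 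The key property is that $L_i(t)=[t=a_0]$ whenever $t\in\{a_0,\ldots,a_{i-1}\}$.

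Define $E_0=\prod_{i=1}^{k-1}[a_i=a_0]$, which is exactly the monochromatic indicator. Then, for $i=1,\ldots,k-1$, replace the factor $[a_i=a_0]$ by $L_i(a_i)$ and track the error. Writing $P_i=\prod_{m<i}L_m(a_m)\cdot\prod_{m>i}[a_m=a_0]$, we have the telescoping identity
\[
  E_0=\prod_{i=1}^{k-1}L_i(a_i)+\sum_{i=1}^{k-1}\Bigl(\prod_{m<i}L_m(a_m)\Bigr)\bigl([a_i=a_0]-L_i(a_i)\bigr)\prod_{m>i}[a_m=a_0].
\]
Rather than arguing from this identity directly, it is cleaner to proceed the other way round. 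Set
\[
  R(a_0,\ldots,a_{k-1})=\prod_{i=1}^{k-1}\bigl([a_i=a_0]-L_i(a_i)\bigr)
\]
and define $q:=E_0-(-1)^{k-1}R$, up to sign conventions. We then check three things.

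First, $R$ vanishes unless the tuple is rainbow. If $a_i$ equals an earlier entry $a_m$ with $m<i$, then $a_i$ lies in the node set of $L_i$, so the $i$-th factor is zero. Hence $R\neq0$ forces $a_1,\ldots,a_{k-1}$ to be distinct from each other and from $a_0$, i.e.\ the tuple is rainbow. Also $R=0$ on monochromatic tuples, because then $L_1(a_1)=1$ (as $L_1\equiv1$ has degree $0$) and $[a_1=a_0]=1$.

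Second, $R\neq0$ on rainbow tuples. There each factor equals $-L_i(a_i)$, and $L_i(a_i)$ is nonzero off the nodes of $L_i$. This follows because the Lagrange basis polynomial $\prod_{m=1}^{i-1}(t-a_m)/(a_0-a_m)$ vanishes only at $a_1,\ldots,a_{i-1}$. So $q$ equals $1$ on monochromatic tuples, equals $\pm R\neq0$ on rainbow tuples, and vanishes otherwise. Thus $q=q_c$ with $c=\pm R$ restricted to rainbow tuples.

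Third, and this is the step needing most care, $q_c$ has the form \eqref{eq:low-degree-decomp}. Expand $\prod_{i=1}^{k-1}\bigl([a_i=a_0]-L_i(a_i)\bigr)$ into $2^{k-1}$ terms. The term consisting of all indicators is $E_0$, and it cancels against $E_0$ in $q$. Every remaining term contains at least one factor $L_i(a_i)$. Let $i$ be the largest index with this factor. That term is a polynomial in $a_i$ of degree at most $i-1$, so it is of the form $\sum_{j<i}a_i^j F_{i,j}$. We must make sure the other factors do not involve $a_i$. The factors with index $m>i$ are indicators $[a_m=a_0]$, which do not involve $a_i$. The factors with $m<i$ are either $[a_m=a_0]$ or $L_m(a_m)$, and the latter has coefficients depending only on $a_0,\ldots,a_{m-1}$. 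The coefficients of $L_i$ itself involve only $a_0,\ldots,a_{i-1}$. So in each term, $a_i$ appears only through the polynomial factor $L_i(a_i)$, and the coefficients of its powers $a_i^j$ are functions of the omitted-variable tuple.

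This gives the decomposition. It is also the point where the choice of "largest index" and the fact that the coefficients of $L_m$ depend only on earlier variables are essential. The only technical annoyance is defining $L_i$ cleanly when earlier entries repeat, and this is handled by interpolating on the set of distinct values.
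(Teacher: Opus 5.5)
Your construction is the paper's own. Your $R$ is its final remainder $R_{k-1}=\prod_{h=1}^{k-1}\bigl(e_{a_0}(a_h)-P_h(a_0,\ldots,a_{h-1};a_h)\bigr)$, and grouping the expanded terms by the largest index carrying an $L$-factor reproduces exactly the paper's extracted terms $R_{h-1}P_h\Delta_{h+1}$. Interpolating on the distinct values of a non-rainbow prefix, instead of setting $P_h=0$ there, is a harmless variant.

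The one thing to fix is the sign, which is not a convention. You must take $q=E_0-R$, so $c=-R$. For even $k$ your $E_0-(-1)^{k-1}R=E_0+R$ equals $2E_0$ plus decomposable terms. But $E_0$ itself admits no representation of the form \eqref{eq:low-degree-decomp} once $D\ge k$: apply Lemma~\ref{lem:dim-argument} with $U=V=[D]$ and $\mathcal{P}$ spanned by $1,t,\ldots,t^{k-2}$.
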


\begin{proof}
    Throughout the proof, we use $a_0$ as the reference value. For $a_0,t\in[D]$, let
    $$
        e_{a_0}(t)=\mathbf{1}_{t=a_0}.
    $$
    For $h\in[k]$, write
    $$
        \Delta_h
        =
        \prod_{i=h}^{k-1}e_{a_0}(a_i),
    $$
    where an empty product equals $1$, so $\Delta_k=1$. In particular, $\Delta_1$ is the indicator of monochromatic tuples, and for $h\in[k-1]$,
    $$
        \Delta_h=e_{a_0}(a_h)\Delta_{h+1}.
    $$

    The function $\Delta_1$ is a natural starting point since it already equals $1$ on monochromatic tuples and vanishes whenever the entries are neither monochromatic nor rainbow. We will modify its values on rainbow tuples to obtain $q_c$, which admits the desired low-degree decomposition.
    For fixed $a_0$, any polynomial representing $e_{a_0}(t)$ on $[D]$ has $D-1$ distinct roots and takes the value $1$ at $a_0$, so its degree is at least $D-1$. Rather than representing each factor in full, we use successive interpolation to extract terms of low degree while keeping track of rainbow prefixes in the remainder. 
    At step $h$, we extract a term of degree at most $h-1$ in $a_h$, leaving a remainder of the form $R_h(a_0,\ldots,a_h)\Delta_{h+1}$, where $R_h$ is nonzero precisely when $(a_0,\ldots,a_h)$ is rainbow. At the final step, the remainder is therefore nonzero precisely on rainbow tuples, so subtracting it from $\Delta_1$ gives the desired function $q_c$ with the required decomposition. The following claim makes this construction precise.

    \begin{claim*}
        There exist functions $F_{i,j}:\ang{[D]}^{k-1}\to\mathbb{Q}$ for $1\le i\le k-1$ and $0\le j\le i-1$, and functions $R_h:\ang{[D]}^{h+1}\to\mathbb{Q}$ for $0\le h\le k-1$, such that
        $$
            \Delta_1
            =
            \sum_{i=1}^{h}\sum_{j=0}^{i-1}
            a_i^j F_{i,j}(a_0,\ldots,\widehat{a_i},\ldots,a_{k-1})
            +
            R_h(a_0,\ldots,a_h)\Delta_{h+1}
        $$
        for every $h\in\{0,\ldots,k-1\}$ and all $a_0,\ldots,a_{k-1}\in[D]$. Moreover, $R_h(a_0,\ldots,a_h)$ is nonzero precisely when $(a_0,\ldots,a_h)$ is rainbow.
    \end{claim*}

    \begin{claimproof}
        We construct these functions by induction on $h$. For $h=0$, the sum is empty and we take $R_0(a_0)=1$. The identity and the required property of $R_0$ both hold, since every tuple with a single entry is rainbow.

        Now let $h\in[k-1]$ and suppose that the functions have been constructed through step $h-1$. The part that remains to be decomposed is
        $$
            R_{h-1}(a_0,\ldots,a_{h-1})\Delta_h
            =
            R_{h-1}(a_0,\ldots,a_{h-1})
            e_{a_0}(a_h)\Delta_{h+1}.
        $$
        To extract a term with the desired degree restriction, we replace        $e_{a_0}(a_h)$ by a polynomial of degree at most $h-1$ in $a_h$. Multiplying this polynomial by $R_{h-1}(a_0,\ldots,a_{h-1})\Delta_{h+1}$ gives the desired new term of low degree in $a_h$, while multiplying the replacement error (the indicator minus the polynomial) by the same factor supplies the new remainder. To keep the new remainder zero on prefixes with repeated entries, we want this error to vanish whenever $a_h$ repeats one of $a_0,\ldots,a_{h-1}$. Since $R_{h-1}$ already vanishes unless the prefix $(a_0,\ldots,a_{h-1})$ is rainbow, we only need to interpolate at the $h$ distinct entries of a rainbow prefix.

        Accordingly, for each fixed prefix $(a_0,\ldots,a_{h-1})\in[D]^h$, define the polynomial $P_h(a_0,\ldots,a_{h-1};t)\in\mathbb{Q}[t]$ by
        $$
            P_h(a_0,\ldots,a_{h-1};t)
            =
            \begin{cases}
                \displaystyle\prod_{i=1}^{h-1}\frac{t-a_i}{a_0-a_i},
                & \text{if }(a_0,\ldots,a_{h-1})\text{ is rainbow},\\[3pt]
                0,
                & \text{otherwise}.
            \end{cases}
        $$
        The case distinction depends only on the fixed prefix, so $P_h$ is a polynomial of degree at most $h-1$ in $t$, whose coefficients are functions of the prefix. On a rainbow prefix, it takes the value $1$ at $a_0$, and the value $0$ at $a_1,\ldots,a_{h-1}$, and hence agrees with $e_{a_0}$ at every earlier entry.
        We split the next factor as
        $$
            e_{a_0}(a_h)
            =
            P_h(a_0,\ldots,a_{h-1};a_h)
            +
            \bigl(e_{a_0}(a_h)-P_h(a_0,\ldots,a_{h-1};a_h)\bigr).
        $$
        The contribution from the first term has the required form. Indeed, both $R_{h-1}$ and $\Delta_{h+1}$ are independent of $a_h$, so expanding $P_h$ gives
        $$
            \begin{aligned}
                &R_{h-1}(a_0,\ldots,a_{h-1})
                P_h(a_0,\ldots,a_{h-1};a_h)\Delta_{h+1}=
                \sum_{j=0}^{h-1}
                a_h^j F_{h,j}(a_0,\ldots,\widehat{a_h},\ldots,a_{k-1})
            \end{aligned}
        $$
        for some functions $F_{h,j}:\ang{[D]}^{k-1}\to\mathbb{Q}$. The contribution from the second term is $R_h(a_0,\ldots,a_h)\Delta_{h+1}$, where
        $$
            R_h(a_0,\ldots,a_h)
            =
            R_{h-1}(a_0,\ldots,a_{h-1})
            \bigl(e_{a_0}(a_h)-P_h(a_0,\ldots,a_{h-1};a_h)\bigr).
        $$
        Substituting these two contributions into the inductive representation gives the required identity at step $h$, without changing any previously chosen $F_{i,j}$.

        It remains to check the required property of $R_h$. If $(a_0,\ldots,a_{h-1})$ is not rainbow, then $R_{h-1}=0$ by the induction hypothesis. If this prefix is rainbow and $a_h$ repeats an earlier entry, the interpolation conditions make the difference in parentheses zero. If $a_h$ is distinct from every entry of this rainbow prefix, then $e_{a_0}(a_h)=0$, while every factor in $P_h(a_0,\ldots,a_{h-1};a_h)$ is nonzero. Hence $R_h(a_0,\ldots,a_h)$ is nonzero precisely when $(a_0,\ldots,a_h)$ is rainbow. This completes the induction and proves the claim.
    \end{claimproof}

    Applying the claim with $h=k-1$ and using $\Delta_k=1$, we obtain
    $$
        \Delta_1
        =
        \sum_{i=1}^{k-1}\sum_{j=0}^{i-1}
        a_i^j F_{i,j}(a_0,\ldots,\widehat{a_i},\ldots,a_{k-1})
        +
        R_{k-1}(a_0,\ldots,a_{k-1}).
    $$
    We now define
    $$
        c(a_0,\ldots,a_{k-1})
        =
        -R_{k-1}(a_0,\ldots,a_{k-1}).
    $$
    By the claim, $c$ is nonvanishing on rainbow tuples and vanishes elsewhere. Since $\Delta_1$ is the indicator of all entries being equal, the definition of $q_c$ gives
    $$
        q_c(a_0,\ldots,a_{k-1})
        =
        \Delta_1-R_{k-1}(a_0,\ldots,a_{k-1}).
    $$
    The required representation follows. This completes the proof of \Cref{lem:q_c-interpolation}.
\end{proof}

We remark that when bounding the size of a vector sunflower-free set, we will use Lagrange interpolation to represent each coefficient function $F_{i,j}$ in \eqref{eq:low-degree-decomp} by a polynomial of degree at most $D-1$ in each of its variables. These representatives remain independent of $a_i$, so the degree restriction in $a_i$ is preserved.

\subsection{Dimension counting}\label{subsec:dim-count}

Fix a function $c$ satisfying the conclusion of Lemma~\ref{lem:q_c-interpolation}. Then $q_c$ admits the representation in \eqref{eq:low-degree-decomp}, and $T_c$ is a sunflower tester by Observation~\ref{obs:T_c-proper}. Expanding the coordinatewise product defining $T_c$ yields the required decomposition with a space $\mathcal{P}$ of polynomial functions spanned by monomials having sufficiently many low-degree coordinates. Thus, by Lemma~\ref{lem:sunflower-dim}, bounding $V_k(D,n)$ reduces to estimating $\dim\mathcal{P}$. Counting the monomials spanning this space gives the following lemma.

\begin{lem}\label{lem:degree-counting}
    If $D\ge k-1$, then
    $$
        V_k(D,n)
        \le
        \sum_{s=0}^{(1-1/(k-1))n}
        \binom{n}{s}(D-k+1)^s(k-1)^{n-s}.
    $$
\end{lem}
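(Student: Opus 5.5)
We combine \Cref{lem:q_c-interpolation} with \Cref{lem:sunflower-dim}. Fix $c$ as in \Cref{lem:q_c-interpolation}. By the remark after that lemma, we replace each $F_{i,j}$ by its Lagrange interpolant, a polynomial of degree at most $D-1$ in each of its $k-1$ variables and independent of $a_i$. Then $q_c(a_0,\ldots,a_{k-1})$ is represented on $[D]^k$ by a polynomial $Q$ in which every monomial $a_0^{e_0}\cdots a_{k-1}^{e_{k-1}}$ has all $e_l\le D-1$. Moreover, for some $i\in[k-1]$ we have $e_i\le i-1\le k-2$. Grouping the monomials of $Q$ by one such chosen $i$, we write $Q=\sum_{i=1}^{k-1}Q_i$, where every monomial of $Q_i$ has $a_i$-exponent at most $k-2$.

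Next we take the product over coordinates. We have
$$T_c(\mathbf{x}_0,\ldots,\mathbf{x}_{k-1})=\prod_{r\in[n]}\sum_{i=1}^{k-1}Q_i\bigl((\mathbf{x}_0)_r,\ldots,(\mathbf{x}_{k-1})_r\bigr).$$
Expanding gives a sum over maps $\sigma:[n]\to[k-1]$ and over choices of a monomial of $Q_{\sigma(r)}$ in each coordinate $r$. Each resulting term is a product of monomials in the variables $(\mathbf{x}_l)_r$. By pigeonhole, for each $\sigma$ some $i\in[k-1]$ satisfies $|\sigma^{-1}(i)|\ge n/(k-1)$. In the corresponding terms, the monomial in $\mathbf{x}_i$ has exponent at most $k-2$ in at least $\lceil n/(k-1)\rceil$ coordinates, and every exponent is at most $D-1$. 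Fix such an $i=i(\sigma)$ for each $\sigma$. Each term then factors as $m(\mathbf{x}_i)\cdot G(\text{other vectors})$, where $m$ is a monomial on $[D]^n$ with exponents in $\{0,\ldots,D-1\}$, of which at most $(1-1/(k-1))n$ lie in $\{k-1,\ldots,D-1\}$.

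Let $\mathcal{P}$ be the span, as functions on $[D]^n$, of all monomials $\prod_r t_r^{e_r}$ with $0\le e_r\le D-1$ and $\#\{r: e_r\ge k-1\}\le (1-1/(k-1))n$. Collecting the terms of the expansion according to $i$ and to the monomial $m$ gives exactly the decomposition required in \Cref{lem:dim-argument}. Lemma~\ref{lem:dim-argument} asks for a basis $p_j$; we apply it with any spanning set, or first pass to a basis, which is harmless since each spanning monomial is a linear combination of basis elements. Then \Cref{lem:sunflower-dim} gives $V_k(D,n)\le\dim\mathcal{P}$.

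It remains to count the spanning monomials. Choose the set of $s$ coordinates with high exponent, where $s\le(1-1/(k-1))n$; there are $\binom ns$ such sets. Each of those coordinates has $D-k+1$ possible exponents, and each of the remaining $n-s$ coordinates has $k-1$. This gives the stated bound, and the hypothesis $D\ge k-1$ makes the count meaningful. The only delicate point is the bookkeeping in the expansion: each term must be assigned to a single $i$, and the factor $G$ must not depend on $\mathbf{x}_i$. Both hold because $\sigma$ is fixed before choosing $i$, and each monomial factors coordinatewise across the $k$ vectors.
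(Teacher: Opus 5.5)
Your proposal is correct and follows the paper's proof essentially step for step: you Lagrange-interpolate the $F_{i,j}$, expand the coordinatewise product, pigeonhole to find one vector $\mathbf{x}_i$ with at least $n/(k-1)$ coordinates of exponent at most $k-2$, factor and group the terms, and then count the monomials spanning $\mathcal{P}$. The only difference is cosmetic: you pigeonhole over the selection map $\sigma$ rather than over each final monomial, which changes nothing in the argument.
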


\begin{proof}
    Choose $c$ and the functions $F_{i,j}$ as in Lemma~\ref{lem:q_c-interpolation}. To apply Lemma~\ref{lem:dim-argument}, we need to separate the dependence on an entire vector from the dependence on the remaining vectors. The representation \eqref{eq:low-degree-decomp} gives a low-degree factor in one coordinate, but coefficient functions arising at other coordinates may still depend on other entries of the same vector. We use Lagrange interpolation to expand these functions into monomials, whose dependence on each vector can be separated.

    Specifically, by Lagrange interpolation in each variable, every $F_{i,j}$ admits a polynomial representation of degree at most $D-1$ in each of its variables. This representation remains independent of $a_i$, so multiplying it by $a_i^j$ preserves the exponent $j$ in that variable. Since $j\le i-1\le k-2\le D-1$, substituting these representatives into \eqref{eq:low-degree-decomp} gives a polynomial representation of $q_c$ in which every individual exponent is at most $D-1$, and every monomial has an exponent at most $k-2$ in at least one of $a_1,\ldots,a_{k-1}$.

    Substitute this representation into the coordinatewise product defining $T_c$ and expand. Every resulting monomial has the form
    $$
        M
        =
        \prod_{i=0}^{k-1}\prod_{r=1}^{n}
        (\mathbf{x}_i)_r^{d_{i,r}},
    $$
    where $0\le d_{i,r}\le D-1$. For each coordinate $r\in[n]$, there is some $i\in[k-1]$ such that $d_{i,r}\le k-2$. By the pigeonhole principle, some $i\in[k-1]$ satisfies this inequality for at least $ n/(k-1)$ coordinates. Consequently, the number of coordinates with $d_{i,r}\ge k-1$ is at most $n-n/(k-1)=(1-1/(k-1))n.$

    Let $\mathcal{P}$ be the $\mathbb{Q}$-vector space of functions on $[D]^n$ spanned by the monomial functions
    $$
        \mathbf{x}\longmapsto\prod_{r=1}^{n}x_r^{d_r},
        \qquad \mathbf{x}=(x_1,\ldots,x_n),
    $$
    whose integer exponents satisfy $0 \leq d_r \leq D-1$ for every $r\in [n]$, and
    $$
        \bigl|\{r\in[n]:d_r\ge k-1\}\bigr|
        \le (1-1/(k-1))n.
    $$
    For each monomial $M$, choose an index $i\in[k-1]$ supplied by the pigeonhole argument and write
    $$
        M
        =
        \left(\prod_{r=1}^{n}(\mathbf{x}_i)_r^{d_{i,r}}\right)
        \left(
            \prod_{\substack{0\le \ell\le k-1\\ \ell\ne i}}
            \prod_{r=1}^{n}(\mathbf{x}_\ell)_r^{d_{\ell,r}}
        \right).
    $$
    The first factor is an element of $\mathcal{P}$ evaluated at $\mathbf{x}_i$, while the second factor is independent of $\mathbf{x}_i$. Grouping these terms and expressing their first factors in a basis of $\mathcal{P}$ therefore gives the decomposition required in Lemma~\ref{lem:dim-argument}. Since $T_c$ is a sunflower tester by Observation~\ref{obs:T_c-proper}, Lemma~\ref{lem:sunflower-dim} yields $V_k(D,n)\le\dim\mathcal{P}$.

    Finally, we count the monomials spanning $\mathcal{P}$ according to the number $s$ of coordinates whose exponents are at least $k-1$. There are $\binom{n}{s}$ choices for these coordinates, $D-k+1$ choices for each of their exponents, and $k-1$ choices for each remaining exponent. Therefore,
    $$
        V_k(D,n)
        \le \dim\mathcal{P}
        \le
        \sum_{s=0}^{(1-1/(k-1))n}
        \binom{n}{s}(D-k+1)^s(k-1)^{n-s}.
    $$
    This completes the proof.
\end{proof}

Combined with a simple application of Chernoff's inequality, Lemma~\ref{lem:degree-counting} yields the proof of Theorem~\ref{thm:vector}.

\begin{proof}[Proof of Theorem~\ref{thm:vector}]
     If $D\le 4k^2$, the desired bound follows from the trivial bound $V_k(D,n)\le D^n$, so we may assume $D>4k^2$. We estimate the sum in Lemma~\ref{lem:degree-counting} by a multiplicative Chernoff bound \cite[Theorem~A.1.12]{alonspencer2016} telling us that $\mathbb{P}(\textrm{Bin}(n,(k-1)/D) \ge n/(k-1))\le (4k^2/D)^{n/(k-1)}$. In particular,  
            \begin{align*} 
            V_k(D,n) &\le D^n \sum_{s=0}^{(1-1/(k-1))n} \binom{n}{s}\left(1-\frac{k-1}D\right)^s\left(\frac{k-1}D\right)^{n-s}\le \bigl((4k^2)^{1/(k-1)} \cdot D^{1-1/(k-1)}\bigr)^{n},
            \end{align*}    
    as desired.
\end{proof}

\section{Reduction from vectors to sets}
\label{sec:transfer}

We now transfer the vector sunflower bound to the set family setting. A probabilistic refinement of the argument of Naslund and Sawin~\cite{naslundsawin2017} yields the following quantitative version. 

\begin{prop}\label{prop:transfer}
    Let $D \geq 2$ be an integer, and suppose that for some constant $C$ with $1 \leq C <D$, we have $V_k(D,n) \leq C^n,$ for every $n \geq 1$. Let $\rho = C/D$. Then, for every integer $N \geq 1$, any subset of $2^{[N]}$ not containing a $k$-sunflower has size at most 
    $$
    (1+\rho^{-1})(1+\rho^{1/(D-1)})^N. 
    $$
\end{prop}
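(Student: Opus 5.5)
The plan is to bound each layer $\mathcal F_m=\{A\in\mathcal F:|A|=m\}$ separately by an averaging argument over random block structures, and then sum over $m$. The first ingredient is the dictionary between sets and vectors. Suppose $B_1,\ldots,B_m\subseteq[N]$ are disjoint, each of size $D$, and each is identified with $[D]$. Then the sets meeting every $B_r$ in exactly one element and avoiding $[N]\setminus\bigcup B_r$ (the \emph{transversals}) correspond bijectively to vectors in $[D]^m$. Distinct transversals form a $k$-sunflower exactly when, in each block, their chosen elements are all equal or pairwise distinct, which is exactly a vector $k$-sunflower. So the transversals lying in a $k$-sunflower-free $\mathcal F$ number at most $V_k(D,m)\le C^m$.

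\emph{Small layers, $Dm\le N$.} Choose $m$ disjoint ordered blocks of size $D$ uniformly at random. Pick the union $W$ (a uniform $Dm$-set), then a uniform ordered partition of $W$. A fixed $m$-set $A$ is a transversal with probability
\[
\frac{\binom{N-m}{Dm-m}}{\binom{N}{Dm}}\cdot\frac{D^m}{\binom{Dm}{m}}=\frac{D^m}{\binom{N}{m}},
\]
using $\binom{N}{Dm}\binom{Dm}{m}=\binom{N}{m}\binom{N-m}{Dm-m}$. Taking expectations gives $|\mathcal F_m|\,D^m/\binom Nm\le C^m$, that is, $|\mathcal F_m|\le\rho^m\binom Nm$.

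\emph{Large layers, $Dm>N$.} Here I would first reduce to a small layer by conditioning on a common core. For a $t$-set $T\subseteq[N]$, the family $\{A\setminus T: A\in\mathcal F_m,\ T\subseteq A\}\subseteq 2^{[N]\setminus T}$ is still $k$-sunflower-free, since adding a common set to every member preserves pairwise intersections being equal. Choose $t=\lceil (Dm-N)/(D-1)\rceil$, so that $D(m-t)\le N-t$. Averaging the small-layer bound over a uniformly random $T$ gives
\[
|\mathcal F_m|\binom mt\Big/\binom Nt\le\rho^{m-t}\binom{N-t}{m-t},
\]
hence $|\mathcal F_m|\le\rho^{m-t}\binom Nm$. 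Write $x=\rho^{1/(D-1)}$. Since $m-t\ge (N-m)/(D-1)-1$, this yields $|\mathcal F_m|\le\rho^{-1}x^{N-m}\binom{N}{N-m}$. The ceiling is where the $\rho^{-1}$ loss enters.

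\emph{Summation.} For the large layers, summing over $m$ gives at most $\rho^{-1}\sum_j\binom Nj x^j\le\rho^{-1}(1+x)^N$. For the small layers, write $\rho^m=x^{(D-1)m}$. Because $(D-1)m\le N-m$, we have $\binom Nm\le\binom N{(D-1)m}$, so
\[
|\mathcal F_m|\le\binom{N}{(D-1)m}x^{(D-1)m}.
\]
The indices $(D-1)m$ are distinct for distinct $m$, so these terms sum to at most $(1+x)^N$. Adding the two ranges gives the claimed $(1+\rho^{-1})(1+\rho^{1/(D-1)})^N$.

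The main obstacle is the large-$m$ range, because the vector bound only applies when $Dm\le N$. The core-conditioning trick is my first attempt there; the delicate part is choosing $t$ with the correct rounding so that the exponent of $\rho$ becomes $(N-m)/(D-1)$ up to an additive constant. The matching trick for the small range, replacing $\binom Nm$ by $\binom N{(D-1)m}$, is the other step that makes the two ranges fit into a single binomial sum.
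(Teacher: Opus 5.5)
Your proposal is correct and is essentially the paper's argument. Conditioning on a uniformly random core $T$ of size $\lceil (Dm-N)/(D-1)\rceil$ and then placing random blocks in $[N]\setminus T$ is the same as the paper's single random permutation yielding $\min\{m,\lfloor (N-m)/(D-1)\rfloor\}$ blocks of size $D$ plus a fixed part $R$, and both give $|\mathcal F_m|\le\rho^{\min\{m,\lfloor (N-m)/(D-1)\rfloor\}}\binom{N}{m}$; the only difference is in the final summation, where the paper bounds the small-layer contribution by $\sum_m\binom{N}{m}\rho^m\le(1+\rho)^N\le(1+\rho^{1/(D-1)})^N$ using $\rho<1$, so your unimodality step $\binom{N}{m}\le\binom{N}{(D-1)m}$ is valid but not needed.
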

\begin{proof}
    Take $\mathcal{F} \subseteq 2^{[N]}$ with no $k$-sunflower. For $0 \leq s \leq N$, let $\mathcal{F}_s$ be the sets in $\mathcal{F}$ of size $s$, i.e. $\mathcal{F}_s=\{F \in \mathcal{F}: |F| =s\}$. Now fix $s$, and let $t := \min \{s, \lfloor \frac{N-s}{D-1} \rfloor\}$. Take a permutation of $[N]$ uniformly at random. Let its first $Dt$ elements form ordered sets $B_1,\ldots,B_t$, each of size $D$, and its next $s-t$ elements form a set $R$. The existence of such $B_1,\ldots,B_t, R$ is guaranteed by our choice of $t$. If $t > 0$, then we define $\mathcal{G}$ by the family of sets obtained by taking the union of $R$ with exactly one element from each $B_i$, i.e., $\mathcal{G}=\{R \cup \{b_1,b_2,\ldots,b_t\}: b_i \in B_i\}$. If $t=0$, set $\mathcal{G}=\{R\}$. In either case, $|\mathcal{G}|=D^t$. Then we can view each element in $\mathcal{G}$ as a vector in $[D]^t$ by identifyng each element $v = R\cup \{b_{1, j_1}, \dots, b_{t, j_t}\} \in \mathcal{G}$ with the tuple $(j_1, \dots, j_t)\in [D]^t$, where $b_{i, j_i}$ is the $j_i$-th element of $B_i$ in the prescribed order for each $i\in [t]$. 
    
    Since $\mathcal{F}_s$ contains no $k$-sunflower, the set of vectors in $[D]^t$ corresponding to $\mathcal{F}_s\cap\mathcal{G}$ contains no $k$-sunflower either. Indeed, if $k$ such vectors formed a $k$-sunflower, then in each block $B_i$, their chosen elements would be either all the same or pairwise distinct. 
    In the former case, that common element would belong to the core, while in the latter case, $B_i$ would contribute nothing to any pairwise intersection. 
    Since every set also contains the common fixed part $R$, the corresponding $k$ sets would therefore form a $k$-sunflower in $\mathcal{F}_s$, a contradiction.
    Hence, $|\mathcal{F}_s \cap \mathcal{G}| \leq V_k(D,t) \leq C^t$. 
    
    By symmetry, every $s$-element subset of $[N]$ has the same probability of belonging to $\mathcal{G}$. 
    Indeed, for any two $s$-element subsets $S_1, S_2\subseteq [N]$, there exists a permutation of $[N]$ sending $S_1$ to $S_2$. 
    Since $\mathcal{G}$ is generated from a uniformly random permutation of $[N]$, relabeling the ground set by any fixed permutation leaves the distribution of $\mathcal{G}$ unchanged. 
    This gives $\mathbb{P}(S_1 \in \mathcal{G}) = \mathbb{P}(S_2 \in \mathcal{G})$. 
    Hence, for every $s$-element subset $S \subseteq [N]$, we have $\mathbb{P}(S \in \mathcal{G}) = |\mathcal{G}|/\binom{N}{s} = D^t/\binom{N}{s}$. 
    
    By linearity of expectation, we have 
    $$
    \EE|\mathcal{F}_s \cap \mathcal{G}| = \sum_{F \in \mathcal{F}_s} \mathbb{P}(F \in \mathcal{G}) = |\mathcal{F}_s| \frac{D^t}{\binom{N}{s}}. 
    $$
    Combining this with $|\mathcal{F}_s \cap \mathcal{G}| \leq C^t$, we get 
    $$
    |\mathcal{F}_s| \leq \binom{N}{s} \frac{C^t}{D^t} = \binom{N}{s} \rho^t \leq \binom{N}{s} \left(\rho^s + \rho^{-1+\frac{N-s}{D-1}} \right). 
    $$
    Therefore, 
    \begin{align*}
    |\mathcal{F}| = \sum_{s=0}^N |\mathcal{F}_s| &\leq \sum_{s=0}^N \binom{N}{s} \left(\rho^s + \rho^{-1+\frac{N-s}{D-1}} \right) \\
    &= (1+\rho)^N + \rho^{-1}(1+\rho^{1/(D-1)})^N \\
    &\leq (1+\rho^{-1})(1+\rho^{1/(D-1)})^N, 
    \end{align*}
    where the second inequality holds by the binomial theorem. 
\end{proof}

Applying the proposition with a suitable choice of $D$ finishes the proof of \Cref{thm:set}.

\begin{proof}[Proof of \Cref{thm:set}]
Take $D=16k^2$ and set
$C=C_kD^{1-1/(k-1)}$. By \Cref{thm:vector},
we have $V_k(D,n)\le C^n$. Writing
$$
 \rho=\frac{C}{D}
 =\left(\frac{4k^2}{D}\right)^{1/(k-1)}
 =4^{-1/(k-1)},
$$
we have $1/2\le\rho<1$, and hence
$1+\rho^{-1}\le3$. Moreover,
$$
 \log\rho=-\frac{\log4}{k-1}\le-\frac1k.
$$
Consequently,
$$
 \rho^{1/(D-1)}
 \le \exp\left(-\frac{1}{k(D-1)}\right)
 \le 1-\frac{1}{1+k(D-1)}
 \le 1-\frac{1}{16k^3},
$$
where the second inequality uses
$e^{-1/x}\le1-1/(1+x)$ for $x>0$.
Applying \Cref{prop:transfer} gives
$$
 (1+\rho^{-1})(1+\rho^{1/(D-1)})^n
 \le 3\left(2-\frac{1}{16k^3}\right)^n,
$$
as desired.
\end{proof}

\section{Concluding remarks}\label{sec:conclusion}

In this paper, we settle the Erd\H{o}s-Szemer\'edi sunflower conjecture by showing that any $k$-sunflower-free family of subsets of $[n]$ is exponentially smaller than the total number of subsets $2^n$. 

The closest sibling of the Erd\H{o}s-Szemer\'edi sunflower conjecture is the Erd\H{o}s-Rado sunflower conjecture \cite{erdosrado1960}, which trades the dependency on the size of the ground set for the dependency on the maximum size of the sets. More precisely, they conjectured that for any fixed $k$, any collection of sets of size at most $r$ that does not contain a $k$-sunflower has size at most $c_k^r$ for some constant $c_k>1$. The current record towards this problem is $O(k \log r )^r$, coming from a breakthrough paper by Alweiss, Lovett, Wu and Zhang \cite{alweissetal2021}, with subsequent refinements by Rao \cite{rao2020} (Tao \cite{tao} found a different proof of this result using Shannon entropy) and Bell, Chueluecha and Warnke \cite{bellchueluechawarnke2021}. It was mentioned, already in 1971, in \cite{erdos1971topics}, that the Erd\H{o}s--Rado sunflower conjecture implies the Erd\H{o}s-Szemer\'edi one. The implication in the other direction remains open, although Alon, Shpilka, and Umans \cite{alonshpilkaumans2013} showed that if one can show an exponential upper bound in the vector variant of the problem (namely $V_k(D,n)$) which is independent of the alphabet size $D$, then one would prove the Erd\H{o}s-Rado Conjecture. Our bounds, while yielding a polynomial saving in alphabet size, remain heavily alphabet-dependent.

We made no particular efforts to optimize the base of the exponent in our bounds and have opted instead to focus on making the argument as approachable as possible. For example, we are using a coarse version of the multiplicative Chernoff bound in our proof of \Cref{thm:vector}, and are very wasteful in our estimates around that argument. There are several further optimizations readily available, such as, for example, noticing that in the proof of \Cref{thm:vector} we put every low-degree factor in the common space $\Span\{1,t,\ldots,t^{k-2}\}$, whereas the factor selected at stage $j$ belongs to $U_j=\Span\{1,t,\ldots,t^{j-1}\}$, which has dimension $j$. With a bit more work, one can exploit this observation to improve the base of the exponent in the upper bound on the vector sunflower problem slightly. We note that this still falls short of improving the best-known base in the $k=3$ case in the set family problem, due to Naslund and Sawin \cite{naslundsawin2017}.
We note that in the vector sunflower problem on $[D]^n$, for $k=3$ our argument does improve the $O(D^{2/3})^n$ bound from \cite{naslundsawin2017} to $O(\sqrt{D})^n$. This also translates to a similar minor improvement in the restricted-union variant of the uniform Erd\H{o}s-Rado setting via the reduction of Heged\H{u}s~\cite{hegedus2018}.

\subsection{Applications, connections and limitations}
The new bound on the vector sunflower problem of Alon, Shpilka and Umans~\cite{alonshpilkaumans2013} (\Cref{thm:vector}) extends to indexed multicolored sunflowers~\cite{alonshpilkaumans2013,naslundsawin2017} and limits the hard instances supplied by the vector-based reduction for arithmetic property testing in~\cite[Theorem 3.3]{havivxie2017}. The set system bound (\Cref{thm:set}) gives consequences for equal subset sums~\cite{alonshpilkaumans2013} and harmonic least-common-multiple patterns~\cite{tangzhang2025,luoyangzhu2026}, which we explain in a bit more detail below. Further consequences concern biased measures and weighted families~\cite{luoyangzhu2026}, and the density of sunflower-free families of subspaces~\cite{ihringerkupavskii2026,otal2026}. The polynomial argument also connects to cap sets and tricolored sum-free sets~\cite{ellenberggijswijt2017,blasiaketal2017}, while the set bound gives alternative exponential estimates for weak $\Delta$-systems, equidistant sets and odd sunflowers with an odd number of members~\cite{naslund2023,franklpachpalvolgyi2024}. The polynomial detector functions constructed in our proof also give subrank bounds with interpretations in matrix multiplication and exact entanglement conversion~\cite{christandlvranazuiddam2021,vranachristandl2015}.
Stronger conclusions would be needed for several further applications. In particular, one would need robust hitting estimates for lifting, circuit complexity, Kneser-graph packing and the Alon--Jaeger--Tarsi problem~\cite{derezendevinyals2025,alweissetal2021}, bounds independent of the ambient number of coordinates for random linear codes~\cite{silas2026}, and control of the core size for restricted-intersection and shadow-tomography problems~\cite{janzeretal2026}.

\noindent \textbf{Harmonic least-common-multiple patterns.}
The following problem was posed by Erd\H{o}s~\cite{erdos1970extremal} (recorded as Problem~856 in Bloom's collection~\cite{bloom856}) in the same 1970 paper where the first mention of the Erd\H{o}s-Szemer\'edi sunflower problem appears. Let $h_k(N)$ be the largest harmonic sum $\sum_{a\in A}1/a$ over sets $A\subseteq[N]$ containing no $k$ distinct integers whose pairwise least common multiples are all equal. Tang and Zhang establish a connection between this problem and sunflower-free set families~\cite{tangzhang2025}. Writing $\delta_k=1/(16k^3)$, \Cref{thm:set} gives, for every fixed $k\ge3$,
$$
    h_k(N)\ll_k(\log N)^{1-\delta_k}.
$$
Indeed, the squarefree-multiplier double count of Luo, Yang, and Zhu~\cite{luoyangzhu2026} gives
$$
 \left(\sum_{\substack{q\le N\\q\text{ squarefree}}}\frac{1}{q}\right)
 \sum_{a\in A}\frac{1}{a}
 \le\sum_{u\le N^2}\frac{F_k(\omega(u))}{u}
 \ll_k(\log N)^{2-\delta_k},
$$
where $F_k(r)$ denotes the maximum size of a $k$-sunflower-free family in $2^{[r]}$, and $\omega(u)$ counts the distinct prime divisors of $u$. The parenthesized sum is of order $\log N$, proving the claim. Thus we obtain a fixed logarithmic-power saving for every $k\ge4$. For $k=3$, the Naslund--Sawin \cite{naslundsawin2017} bound already gives an asymptotic exponent $3/2^{2/3}-1=0.889881\ldots$~\cite{tangzhang2025}.

There is also a Ramsey consequence. If $P$ is the product of the first $X$ primes, encode $S\subseteq[X]$ by $P/\prod_{i\in S}p_i$. A sunflower then gives integers with equal pairwise least common multiples. The monochromatic-sunflower estimate below and $\log P=O(X\log X)$ show that every $c$-coloring of $[N]$ contains such a monochromatic $k$-tuple once
$$
    N\ge\exp(A_k\log c\,\log\log c),
$$
for a sufficiently large constant $A_k$ depending only on $k$ and all sufficiently large $c$.

\noindent \textbf{Equal subset sums.}
By \Cref{thm:set}, every $c$-coloring of $2^{[N]}$ contains a monochromatic $k$-sunflower if
$$
 N>32k^3\log(3c).
$$
Indeed, a largest color class then has more than $3(2-\delta_k)^N$ members. Coloring subsets of a sequence in a finite abelian group $G$ by their sums and cardinalities, and removing the common core, gives $k$ pairwise disjoint nonempty index sets of equal size and equal sum. Thus $A_k\log(2|G|)$ terms suffice, for a constant $A_k$ depending only on $k$. The cardinality color ensures that all petals have the same positive size. For three petals, this recovers the known consequence used by Alon, Shpilka and Umans to rule out the Coppersmith--Winograd conjecture about disjoint equivoluminous subsets~\cite{alonshpilkaumans2013}.

Similarly, a subset of $[M]$ of size at least $A_k\log(2M)$ contains $k$ pairwise disjoint nonempty subsets with a common positive sum $s$. To see this, color by sum and cardinality, using at most $(NM+1)(N+1)$ colors when the original set has size $N$. Taking unions gives $0,s,\ldots,ks$ among its subset sums. Both deductions apply for every fixed $k$.

\noindent \textbf{Induced Erd\H{o}s--Sauer problem.}
As a final application, we mention a topic which has only recently been connected to sunflower problems. The famous Erd\H{o}s--Sauer problem \cite{erdos1975progress} asks, for fixed $k\ge3$, for the maximum number of edges in an $n$-vertex graph not containing a $k$-regular subgraph. In a remarkable recent paper, Janzer and Sudakov~\cite{janzersudakov2023} solved this problem by showing that the answer is $\Theta_k(n\log\log n)$, matching the lower bound of Pyber, R\"odl and Szemer\'edi~\cite{pyberrodlszemeredi1995} from 1995. 
In a subsequent work, Chakraborti, Janzer, Methuku, and Montgomery \cite{ChakrabortiJanzerMethukuMontgomery2026} observed and used a connection between the Erd\H{o}s-Sauer problem and the Erd\H{o}s-Rado sunflower conjecture and obtained even essentially tight dependencies also on the regularity parameter $k$.
None of these arguments, however, provide an induced $k$-regular subgraph. The induced variant of the problem, an arguably even more natural question, was already posed by Szemer\'edi in the 1970s; see Erd\H{o}s~\cite[Section~3]{erdos1975progress}. Recently, we discovered a reduction showing that \Cref{thm:vector} implies that $\Theta_k(n\log\log n)$ is the answer in the induced variant as well. Since the reduction is highly non-trivial and of a very different flavor from the arguments in this paper, we will present it in an upcoming short note~\cite{induced-erdos-sauer}.

A recent preprint by Karingula and Lovett~\cite{Karingula-Lovett} proves that, for $k \ge 4$, every tensor faithfully realizing the set-sunflower relation on the full Boolean cube has maximal slice rank, over every field and with arbitrary nonzero values on the prescribed support. They also obtain comparable lower bounds when the relation is prescribed only on pairwise distinct inputs. They discuss extensive use of AI, including multi-agent setups, with particular emphasis on expressing their sunflower argument directly in terms of slice decompositions. Their obstruction concerns the set-sunflower relation. Our construction instead gives a tester for vector sunflowers, and we transfer the resulting vector bound to set families; thus their result does not directly apply to our tester.

\medskip 

\subsection*{Statement on the use of AI}
The proof of \Cref{thm:vector} was found by ChatGPT 6 Astra Pro, following a short discussion, which started by asking it to generalize the $k=3$ case argument in \cite{naslundsawin2017}, which we needed for our original target of proving the induced variant of the Erd\H{o}s--Sauer problem. The original argument it produced used the slice-rank terminology due to Tao (as used in \cite{naslundsawin2017}); we rewrote it in the polynomial method language as we believe this would make the argument slightly more approachable to a reader unfamiliar with the slice-rank method and since no deep result or property of slice rank was used. The authors have verified all the results in the paper and take full responsibility for the content. The proof has been verified in Lean 4.35; the verification will be made available on the Palomar repository.

\subsection*{Acknowledgements}
This work was performed during a visit of Hyunwoo Lee to the University of Vienna. Hyunwoo Lee is grateful to the University of Vienna for its support and hospitality.

\bibliographystyle{amsplain_initials_nobysame}
\bibliography{sunflower-references}

\end{document}